\documentclass[letterpaper]{amsart}
\usepackage{amsfonts,amssymb,amscd,amsmath,enumerate,url,verbatim}
\usepackage[all]{xy}
\SelectTips{eu}{}
\xyoption{curve}
\CompileMatrices
\setcounter{tocdepth}{1}

\newcommand{\ov}{\overline}

\newcommand{\col}{\colon}

\newcommand{\fm}{{\mathfrak m}}

\newcommand{\fs}{{\mathfrak s}}

\newcommand{\fn}{{\mathfrak n}}
\newcommand{\fp}{{\mathfrak p}}

\newcommand{\Ima}{\operatorname{Im}}

\newcommand{\Ker}{\operatorname{Ker}}

\newcommand{\rank}{\operatorname{rank}}

\newcommand{\Tor}{\operatorname{Tor}}
\newcommand{\Ext}{\operatorname{Ext}}

\newcommand{\Po}{\operatorname{P}}

\newcommand{\agr}[2][{}]{{{#2}^{\mathsf g}_{#1}}}

\newcommand{\Soc}{\operatorname{Soc}}


\theoremstyle{plain}
\newtheorem{theorem}{Theorem}[section]

\newtheorem*{Theorem1}{Theorem 1}
\newtheorem*{Theorem2}{Theorem 2}

\newtheorem*{Main Theorem}{Main Theorem}

\newtheorem{proposition}[theorem]{Proposition}
\newtheorem{lemma}[theorem]{Lemma}

\newtheorem{corollary}[theorem]{Corollary}
\newtheorem{subtheorem}{Theorem}[theorem]
{\Alph{theorem}}
{\Alph{theorem}}

\theoremstyle{definition}
\newtheorem*{definition}{Definition}

\newtheorem{chunk}[theorem]{}
\newtheorem{subchunk}[subtheorem]{}

\theoremstyle{remark}

\newenvironment{bfchunk}{\begin{chunk}\textit}{\end{chunk}}

\newtheorem*{Case1}{Case 1}
\newtheorem*{Case2}{Case 2}
\newtheorem*{Case3}{Case 3}
\newtheorem*{Case4}{Case 4}

\newtheorem{remark}[theorem]{Remark}

\numberwithin{equation}{theorem}

\begin{document}

\title[Conditions for the Yoneda algebra]{Conditions for the Yoneda algebra\\ of a local ring\\to be generated in low degrees}
\begin{abstract} The powers $\fm^n$ of the maximal ideal $\fm$ of a local Noetherian ring $R$ are known to satisfy certain homological properties for {\it large} values of $n$. For example, the homomorphism $R\to R/\fm^n$ is Golod for $n\gg 0$. We study when such properties hold for {\it small} values of $n$, and we make connections with  the structure of the Yoneda Ext algebra, and more precisely with the property that the Yoneda algebra of $R$ is generated in degrees $1$ and $2$. A complete treatment of these properties is pursued in the case of compressed Gorenstein local rings. 
\end{abstract}

\author[J.~Hoffmeier]{Justin Hoffmeier}
\address{Justin~Hoffmeier\\ Department of Mathematics and Statistics\\
   Northwest Missouri State University,  Maryville\\ MO 64468\\ U.S.A.}
     \email{jhoff@nwmissouri.edu}

\author[L.~M.~\c{S}ega]{Liana M.~\c{S}ega}
\address{Liana M.~\c{S}ega\\ Department of Mathematics and Statistics\\
   University of Missouri\\ \linebreak Kansas City\\ MO 64110\\ U.S.A.}
     \email{segal@umkc.edu}

\subjclass[2000]{}
\keywords{}
\thanks{Research partly supported by NSF grant DMS-1101131 and grants from the Simons Foundation (\#20903 and \#354594, Liana Sega)}

\maketitle

\section*{Introduction}

Let $(R,\fm,k)$ be a {\it local ring}, that is, a commutative noetherian ring $R$ with unique maximal ideal $\fm$ and $k=R/\fm$.  For $n\ge 1$ we let $\nu_n\col \fm^n\to \fm^{n-1}$ denote the canonical inclusion and for each $i\ge 0$ we consider the induced maps 
$$\Tor_i^R(\nu_n,k)\col\Tor_i^R(\fm^n,k)\to \Tor_i^R(\fm^{n-1},k)\,.$$
Using the terminology of \cite{small}, we say that $\fm^n$ is a {\it small} submodule of $\fm^{n-1}$ if $\Tor_i^R(\nu_n,k)=0$ for  all $i\ge 0$. This condition  implies that the canonical projection  $\rho_n\col R\to R/\fm^n$ is a Golod homomorphism, but the converse may not hold. 

Levin \cite{Levin} showed that $\fm^n$ is a small submodule of $\fm^{n-1}$ for all sufficiently large values of $n$.  On the other hand, the fact that $\fm^n$ is a small submodule of $\fm^{n-1}$ for small values of $n$ is an indicator of strong homological properties. It is known that  $\fm^2$ is a small submodule of $\fm$ if and only if the Yoneda algebra $\Ext_R(k,k)$ is generated in degree $1$, cf.\! \cite[Corollary 1]{Ro}.  More generally, we show:

\begin{Theorem1}
Let $(R,\fm,k)$ be a  local ring. Let $\widehat R=Q/I$ be a minimal Cohen presentation of $R$,  with $(Q,\fn,k)$ a regular local ring and $I\subseteq \fn^2$. Let $t$ be an integer such that $I\subseteq \fn^t$. The following statements are then equivalent: 
\begin{enumerate}[\quad\rm(1)]
\item $\fm^t$ is a small submodule of $\fm^{t-1}$; 
\item $\rho_t\col R\to R/\fm^t$ is Golod;
\item $\rho_n\col R\to R/\fm^n$ is Golod for all $n$ such that $t\le n\le 2t-2$;
\item $I\cap \fn^{t+1}\subseteq \fn I$ and the algebra $\Ext_R^*(k,k)$ is generated by $\Ext^1_R(k,k)$ and $\Ext^2_R(k,k)$.
\end{enumerate}
\end{Theorem1}

If $R$ is artinian, its {\it socle degree} is the largest integer $s$ with $\fm^s\ne 0$.  When $R$ is a compressed Gorenstein local ring (see Section 3 for a definition) of socle degree $s\ne 3$, we determine all values of the integer $n$ for which the homomorphism $\rho_n$ is Golod, respectively for which $\fm^n$ is a small submodule of $\fm^{n-1}$, and we use Theorem 1 to establish part (3) below. 

\begin{Theorem2}
Let $(R,\fm,k)$ be a compressed Gorenstein local ring of socle degree $s$. Assume $2\le s\ne 3$ and  let $t$ denote the smallest integer such that $2t\ge s+1$. 
If $n\ge 1$, then the following hold: 
\begin{enumerate}[\quad\rm(1)]
\item $\fm^n$ is a small submodule of $\fm^{n-1}$  if and only if $n>s$ or $n=s+2-t$. 
\item $\rho_n\col R\to R/\fm^n$ is Golod if and only if $n\ge s+2-t$.
\item If $s$ is even, then $\Ext_R(k,k)$ is generated by $\Ext^1_R(k,k)$ and $\Ext^2_R(k,k)$. 
\end{enumerate}
\end{Theorem2}

The conclusion of (3) does not hold when $s$ is odd, see Corollary \ref{cor}. 

Section 1 provides  definitions and properties  of the homological notions of interest. Theorem 1 is proved in Section 2 and Theorem 2 is proved in Section 3.

\section{Preliminaries}
 Throughout the paper $(R,\fm, k)$ denotes a commutative noetherian local ring with maximal ideal $\fm$ and  residue field $k$. Let  $M$ be a finitely generated $R$-module. 

We denote by $\widehat R$ the completion of $R$ with respect to $\fm$. A {\it minimal Cohen presentation} of $R$ is a presentation $\widehat R=Q/I$, with $Q$ a regular local ring with maximal ideal $\fn$ and $I$ an ideal with $I\subseteq \fn^2$. We know that such a presentation exists, by the Cohen structure theorem.

We denote by $\agr R$ the associated graded ring with respect to $\fm$, and by $\agr M$ the associated graded module with respect to $\fm$.  We denote by $(\agr R)_j$ the $j$-th graded component of $\agr R$. 
For any $x\in R$ we denote by $x^*$  the image of $x$ in $\fm^j/\fm^{j+1}=(\agr R)_j$, where $j$ is such that $x\in\fm^j\smallsetminus\fm^{j+1}$. For an ideal $J$ of $R$, we denote by $J^*$ the homogeneous ideal generated by the elements $x^*$ with  $x\in J$.

\begin{remark}\label{thom}
\label{independence}
With $\widehat R=Q/I$ as above, the following then hold:
\begin{enumerate}[\quad\rm(1)]
\item $I\subseteq \fn^t$ if and only if $\rank_k(\fm^{t-1}/\fm^{t})=\displaystyle{\binom{e+t-2}{e-1}}$, where $e$ denotes the minimal number of generators of $\fm$. 
\item Assume $t\ge 2$ and $I\subseteq \fn^t$. Then $I\cap \fn^{t+1}\subseteq \fn I$ if and only if the map 
$$\Ext^2_{\rho_t}(k,k)\colon\Ext_{R/\fm^t}^2(k,k)\to \Ext_R^2(k,k)$$ induced by the canonical projection $\rho_t\colon R\to R/\fm^t$ is surjective. 
\end{enumerate}
To prove (1), note that $\agr{(\widehat R)}=\agr Q/I^*$ and $\agr Q$ is isomorphic to a  polynomial ring over $k$ in $e$ variables of degree $1$. 
We have that $I\subseteq\fn^t$ if and only if $I^*\subseteq(\agr{\fn})^t$, which is equivalent to 
$
(\agr{Q})_{t-1}=(\agr{Q}/I^*)_{t-1}
$
and thus to 
$$
\rank_k(\agr Q)_{t-1}=\rank_k(\agr{\widehat R})_{t-1}\,.
$$
Therefore, (1) follows by noting that $\rank_k(\agr{Q})_j=\displaystyle{\binom{e-1+j}{e-1}}$ and $\rank_k(\agr{\widehat R})_j=\rank_k(\fm^j/\fm^{j+1})$ for each $j$. 

For a proof of (2), see \c Sega \cite[4.3]{defect}, noting that the map $\Ext^2_{\rho_t}(k,k)$ is surjective if and only if the induced map 
$$
\Tor_2^{\rho_t}(k,k)\colon\Tor_2^{R}(k,k)\to \Tor_2^{R/\fm^t}(k,k)
$$
is injective. 
\end{remark}

\begin{definition}
\label{t-homog}
Let $\widehat R=Q/I$ be a minimal Cohen presentation of $R$.  Let $t\ge 2$ be an integer. We say that the local ring $R$ is {\it $t$-homogeneous} if $I\subseteq \fn^t$ and $I\cap \fn^{t+1}\subseteq \fn I$. Remark \ref{independence} shows that this definition does not depend on the choice of the minimal Cohen presentation.
\end{definition}

We set
$$
v(R)=\sup\{t\ge 0\mid I\subseteq \fn^t\}\,.
$$
Note that, if $R$ is $t$-homogeneous and $I\ne 0$, then $t=v(R)$.

\begin{remark}
The terminology of {\it two-homogeneous algebra} was  previously used by L\"ofwall \cite{Lo}, with a different meaning, in the context of augmented graded algebras. 
\end{remark}

\begin{lemma}\label{degreet}
Let $\widehat R=Q/I$ be a minimal Cohen presentation of $R$. If the ideal $I^*$ of the polynomial ring $\agr Q$ is generated by homogeneous polynomials of degree $t$, then the ring $R$ is $t$-homogeneous. 
\end{lemma}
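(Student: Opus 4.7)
The plan is to verify the two defining conditions of $t$-homogeneity, namely $I\subseteq \fn^t$ and $I\cap \fn^{t+1}\subseteq \fn I$. The inclusion $I\subseteq\fn^t$ is essentially immediate from the hypothesis: if $x\in I$ satisfies $x\in \fn^d\smallsetminus\fn^{d+1}$, then $x^*\in I^*$ is homogeneous of degree $d$, and since $I^*$ is generated in degree $t$ it lives in degrees $\ge t$, forcing $d\ge t$.

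The main work is to produce a particularly well-chosen system of generators of $I$. I would pick $g_1,\dots,g_r$ to be a $k$-basis of the finite-dimensional space $(I^*)_t$; since $I^*$ is generated in degree $t$, these form a minimal homogeneous system of generators of $I^*$. I would then lift each $g_i$ to an element $f_i\in I\cap\fn^t$ with $f_i^*=g_i$: writing $g_i=\sum_j \eta_{ij}x_j^*$ with $x_j\in I$ and $\eta_{ij}\in\agr Q$ homogeneous of complementary degree, lifting each $\eta_{ij}$ to $\tilde\eta_{ij}\in Q$ of the correct $\fn$-adic order, and setting $f_i=\sum_j\tilde\eta_{ij}x_j$. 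Since $(f_1,\dots,f_r)^*\supseteq(g_1,\dots,g_r)=I^*$, a standard Nakayama-type argument that relies on Krull's intersection theorem in the noetherian local ring $Q$ then yields $(f_1,\dots,f_r)=I$.

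For the second condition, I would take $x\in I\cap\fn^{t+1}$ and write $x=\sum_i q_i f_i$ with $q_i\in Q$. Let $u_i\in k$ be the residue of $q_i$ modulo $\fn$; choose $c_i\in Q$ to be $0$ when $u_i=0$ and a unit lifting $u_i$ otherwise, and set $q_i'=q_i-c_i\in\fn$. Then $\sum_i q_i' f_i\in\fn I\subseteq\fn^{t+1}$, so $\sum_i c_i f_i=x-\sum_i q_i'f_i\in \fn^{t+1}$. Passing to initial forms in degree $t$ yields the relation $\sum_i u_i g_i=0$ in $(\agr Q)_t$. Because $g_1,\dots,g_r$ were chosen as a basis of $(I^*)_t$, every $u_i$ must vanish, forcing each $c_i=0$ and hence each $q_i\in\fn$. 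This gives $x\in\fn I$, as desired.

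I expect the main obstacle to be the lifting step---producing $f_i\in I$ with prescribed initial form $g_i$ and then running the Nakayama/Krull argument to conclude that these $f_i$ generate all of $I$ rather than a strictly smaller ideal. Once that is in place, the verification of $I\cap\fn^{t+1}\subseteq\fn I$ reduces to the observation that a minimal homogeneous system of generators of $I^*$ concentrated in a single degree is $k$-linearly independent.
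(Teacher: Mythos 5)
Your proof is correct, and it follows a genuinely different route from the paper's. The paper does not construct an explicit generating set of $I$; instead, given $x\in I\cap\fn^{t+1}$, it iterates a reduction: taking $a$ minimal with $x\notin\fn^{t+1+a}$, it writes $x^*=\sum y_i^*z_i^*$ with $y_i^*\in(\agr Q)_a$ and $z_i\in I\cap\fn^t$, sets $x_1=x-\sum y_iz_i$, observes $x-x_1\in\fn I$ and $x_1\in I\cap\fn^{t+a+1}$, and repeats to obtain $x\in\fn I+\fn^n$ for all $n$; Krull's intersection theorem then gives $x\in\fn I$. Your approach front-loads the Krull argument: you first produce lifts $f_1,\dots,f_r\in I\cap\fn^t$ of a $k$-basis $g_1,\dots,g_r$ of $(I^*)_t$ and invoke the standard fact that $J\subseteq I$ with $J^*=I^*$ forces $J=I$ (this is where the Krull-type induction lives in your argument), after which the containment $I\cap\fn^{t+1}\subseteq\fn I$ drops out of a finite linear-algebra computation using the independence of the $g_i$'s. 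A small simplification you could note: because $I\subseteq\fn^t$ is already established, every $x_j^*$ appearing in the expression $g_i=\sum_j\eta_{ij}x_j^*$ has degree $\ge t$, so the $\eta_{ij}$ are forced to be scalars and the ``lifting of complementary-degree coefficients'' is just lifting elements of $k$. The trade-off is that the paper's proof is shorter and never needs to know that the $f_i$ generate, while yours isolates the harder Krull/Nakayama step into a reusable lemma about associated graded ideals and then finishes by direct comparison of coefficients.
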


\begin{proof}
Assume $I^*$ is generated by homogeneous polynomials of degree $t$. In particular, it follows that $I\subseteq \fn^t$.  To prove $I\cap \fn^{t+1}\subseteq \fn I$, we will show $I\cap \fn^{t+1} \subseteq \fn I + \fn^j$ for all $j\gg0$. The Krull intersection theorem then gives the conclusion.

Let $x \in I\cap \fn^{t+1}$ and let $a\ge 1$ be the smallest integer such that $x\not\in \fn^{t+1+a}$. Then $x^*$ is an element of degree $t+a$ of $I^*$. Since  $I^*$ is generated by homogeneous elements of degree $t$, we can write 
$$
x^*=\sum y_i^* z^*_i
$$
with $y_i^*\in (\agr Q)_a$  and $z_i^*\in I^*\cap (\agr Q)_t$  for each $i$, where $y_i\in \fn^a$ and $z_i\in I\cap \fn^t$.  Set  
$$
x_1= x-\sum y_i z_i
$$
and note that $x_1\in I\cap \fn^{t+a+1}$ and $x-x_1\in \fn I$. In particular $x\in \fn I + \fn^{t+a+1}$. Applying the argument above to $x_1$, we obtain an element $a_1$ such that $a_1>a$, and an element  $x_2$ such that $x_2\in I\cap \fn^{t+a_1+1}$ and  $x_1-x_2\in \fn I$.  In particular $x_1$, and thus $x$, are elements of $\fn I+\fn^{t+a_1+1}$. An inductive argument  produces a sequence of integers $1<a_1<a_2<\dots$ such that $x\in \fn I + \fn^{t+a_i+1}$ for all $i$, and  gives the desired conclusion.
\end{proof}

\begin{remark}
The converse of the lemma does not hold. This can be seen by considering the $2$-homogeneous local ring $R=k[[x,y]]/(x^2+y^3, xy)$, for which $\agr R=k[x,y]/(x^2,y^4, xy)$. 
\end{remark}

We now proceed to provide definitions for the homological notions of interest, and recall some of their properties. 

The \textit{Poincar\'e series} $\Po^R_M(z)$ of $M$ is the formal power series 
$$
\Po^R_M(z)=\sum_{i\ge 0}\rank_k(\Tor_i^R(M,k))z^i.
$$
\begin{bfchunk}{Golod rings, modules, and homomorphisms.}\label{goloddef}
Let $(S, \fs, k)$ be a local ring and $\varphi\col R\to S$ be a surjective homomorphism of local rings. Following Levin \cite{Golodmodules}, we say that an $S$-module $M$ is $\varphi$-{\it Golod} if the following equality is satisfied: $$\Po_M^S(z)=\frac{\Po_M^R(z)}{(1-z(\Po_S^R(z)-1))}.$$ 

We say that $\varphi$ is a \textit{Golod homomorphism} if $k$ is a $\varphi$-Golod module.

The ring $R$ is said to be a \textit{Golod ring} if the canonical projection $Q\to \widehat{R}$ is a Golod homomorphism, where $\widehat{R}=Q/I$ is a minimal Cohen presentation.  This definition is independent of the choice of representation by \cite[Lemma 4.1.3]{Luchonotes}. Note that this definition is equivalent to the definition given by Avramov in terms of Koszul homology in \cite[\S 5]{Luchonotes}. A classical example of a Golod ring is $Q/\fn^j$ for any $j\ge 2$, where $(Q,\fn,k)$ is a regular local ring, as first observed by Golod \cite{Golod} .
\end{bfchunk}

\begin{bfchunk}{Small homomorphisms.}\label{smallstuff}
Let $\varphi: R\to S$ be a surjective ring homomorphism as above and consider the  induced maps
$$\Ext_{\varphi}^i(k,k):\Ext_S^i(k,k)\to\Ext_R^i(k,k)$$
$$\Tor_i^{\varphi}(k,k):\Tor_i^R(k,k)\to\Tor_i^S(k,k).$$

We say that $\varphi$ is \textit{small} if $\Ext_{\varphi}^*(k,k)$ is surjective or, equivalently, if $\Tor_*^{\varphi}(k,k)$ is injective.
Note that $\Tor_1^{\varphi}(k,k)$ can be identified with the canonical map 
$$\fm/\fm^2\to \fs/\fs^2\,.$$
induced by $\varphi$.  Thus $\Tor_1^{\varphi}(k,k)$ is an isomorphism if and only if $\Ker(\varphi)\subseteq \fm^2$. 

For convenience we collect below a few known facts about small homomorphisms:
\begin{enumerate}[\quad\rm(1)]
\item If $\varphi$ is Golod then $\varphi$ is small, see Avramov \cite[3.5]{small}.
\item If $S$ is a Golod ring and $\varphi$ is small, then $\varphi$ is Golod, see \c Sega \cite[6.7]{maxprops}.
\end{enumerate}
\end{bfchunk}

\begin{bfchunk}{Inert modules.}
\label{eq}
Let $\varkappa\col P\to R$ be a surjective homomorphism of local rings. Following Lescot \cite{Inert}, we say that an $R$-module $M$ is \textit{inert by $\varkappa$} if the following equality  holds: $$\Po_k^R(z) \Po_M^P(z) = \Po_k^P(z) \Po_M^R(z).$$

If $\varkappa$ is a Golod homomorphism the following are equivalent:
\begin{enumerate}[\quad\rm(1)]
\item $M$ is $\varkappa$-Golod;
\item $\Tor_i^{\varkappa}(M,k)$ is injective for all $i$;
\item $M$ is inert by $\varkappa$.
\end{enumerate}
 The equivalence of (1) and (3) is a direct consequence of the definitions and the equivalence of (1) with (2) is given by Levin \cite[1.1]{Golodmodules}.

\begin{subchunk}\label{2of3}
Consider a sequence of surjective homomorphisms of local rings 
$$R\xrightarrow{\alpha} S\xrightarrow{\beta} T\,.$$
Lescot \cite[Theorem 3.6] {Inert} shows that a $T$-module $M$ is inert by $\beta\circ \alpha$ if an only if $M$ is inert by $\beta$ and $M$ is inert by $\alpha$, when considered as an $S$-module. 
\end{subchunk}

\begin{subchunk}\label{inert3}
Let $Q$ be a regular local ring with maximal ideal $\fn$ and an ideal $J$ such that $J\subseteq\fn^t$ with $t\ge 2$. Set $S=Q/J$ and $\overline{\fn}=\fn/J$. If a finitely generated $S$-module $N$ is annihilated by $\overline{\fn}^{t-1}$ then $N$ is inert by the natural projection $\varkappa: Q\to S$,  see \cite[3.7 and 3.11]{Inert}
\end{subchunk}
\end{bfchunk}

We end this section with introducing some more notation. 
\begin{chunk}
\label{A=0}
If $(R,\fm,k)$ is a local ring and $M$ is a finite $R$-module, we let $\nu_M\col \fm M\to M$ denote the canonical inclusion and consider the induced maps $$\Tor_i^R(\nu_M,k)
\col\Tor_i^R(\fm M,k)\to\Tor_i^R(M,k)\,.$$
They fit into the long exact sequence $$\cdots\to\Tor_i^R(\fm M,k)\xrightarrow{\Tor_i^R(\nu_M,k)}\Tor_i^R(M,k)\to\Tor_i^R(M/\fm M,k)\to\cdots.$$ 

If $\Lambda$ is a graded vector space over $k$, we set  $H_{\Lambda}(z)=\sum_{i\ge 0} \rank_k(\Lambda_i)z^i$; this formal power series is called the {\it Hilbert series} of $\Lambda$.  
Since rank is additive on exact sequences, a rank count in the exact sequence above gives
\begin{equation}\label{poincare}
\Po_M^R(z)-a\Po_k^R(z)+z\Po_{\fm M}^R(z)=(1+z)H_{\Ima(\Tor_*^R(\nu_M,k))}(z)
\end{equation}
where $a=\rank_k(M/\fm M)$. 
We set
$$
T^R_M(z)=H_{\Ima(\Tor_*^R(\nu_M,k))}(z)\,.
$$
\end{chunk}

\section{Homological properties of powers of the maximal ideal}
The purpose of this section is to prove Theorem 1 in the introduction, restated as Theorem \ref{K2} below. 

For each integer $j$ let  
$$
\rho_j\col R\to R/\fm^j\qquad\text{and}\qquad \nu_j\col \fm^j\to \fm^{j-1}$$
 denote the canonical projection, respectively the inclusion, and consider the induced maps
\begin{align*}
\Tor_i^{\rho_j}(R/\fm^{j-1},k)&\colon \Tor_i^{R}(R/\fm^{j-1},k)\to \Tor_i^{R/\fm^j}(R/\fm^{j-1},k)\\
\Tor_i^R(\nu_j,k)&\col \Tor_i^R(\fm^j,k)\to \Tor_i^R(\fm^{j-1},k).
\end{align*}
Using the terminolgy in the appendix of \cite{small}, we say that $\fm^j$ is a {\it small submodule} of $\fm^{j-1}$ if  $\Tor_i^R(\nu_j,k)=0$ for all $i\ge 0$.

\begin{remark}
\label{zero}
For $i,j\ge 0$ let 
 $$\eta_{i}^j\col \Tor_{i}^R(R/\fm^j,k)\to\Tor_{i}^R(R/\fm^{j-1},k)$$
denote the map induced by the canonical projection $R/\fm^j\to R/\fm^{j-1}$. 

Note that $\Tor_i^R(\nu_j,k)=0$ if and only if $\eta_{i+1}^j=0$.  Indeed, this is a standard argument, using the canonical isomorphisms
$
\Tor_{i+1}^R(R/\fm^{n},k)\cong \Tor_{i}^R(\fm^n,k)
$
which arise as connecting homomorphisms in the long exact sequence associated to the exact sequence
$$
0\to \fm^n\to R\to R/\fm^n\to 0\,,
$$
with $n=j$ and $n=j-1$.
\end{remark}

\begin{chunk}\label{G1}
We state here a needed result of  Rossi and \c Sega \cite[Lemma 1.2]{RossiSega}:

 Let $\varkappa\col (P,\fp,k)\to(R,\fm,k)$ be a surjective homomorphism of local rings. Assume there exists an integer $a$ such that 
\begin{enumerate}
\item The map $\Tor_i^P(R,k)\to\Tor_i^P(R/\fm^a,k)$ induced by the natural projection $R\to R/\fm^a$ is zero for all $i > 0.$
\item The map $\Tor_i^P(\fm^{2a},k)\to\Tor_i^P(\fm^a,k)$ induced by the inclusion $\fm^{2a}\hookrightarrow \fm^a$ is zero for all $i\ge 0.$
\end{enumerate}
Then $\varkappa$ is a Golod homomorphism.
\end{chunk}

\begin{proposition}\label{nu=0}
Let $(R,\fm,k)$ be a local ring and let $j\ge 2$ be an integer. The following are equivalent:
\begin{enumerate}[\quad\rm(1)]
\item $\fm^j$ is a small submodule of $\fm^{j-1}$;
\item $\Tor_i^{\rho_j}(R/\fm^{j-1},k)$ is injective for all $i\ge 0$;
\item $\rho_j$ is Golod and $R/\fm^{j-1}$ is inert by $\rho_j$.
\end{enumerate}
If these conditions hold, then $\rho_l$ is Golod for all integers $l$ with $j\le l\le 2j-2$.
\end{proposition}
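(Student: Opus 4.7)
The plan is to prove $(2)\Rightarrow(1)$ by a quick diagram chase, deduce that $\rho_j$ is Golod from (1) via Lemma \ref{G1}, and close the loop $(1)\Rightarrow(2)$ by comparing long exact sequences. Equivalence with (3) will then follow from \ref{eq}, and the final claim uses Lemma \ref{G1} a second time.

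For $(2)\Rightarrow(1)$, I would factor the natural projection $R\to R/\fm^{j-1}$ as $R\to R/\fm^j\to R/\fm^{j-1}$. Naturality of change of rings yields the commutative square
\[
\begin{CD}
\Tor_i^R(R/\fm^j,k) @>\eta_i^j>> \Tor_i^R(R/\fm^{j-1},k) \\
@VVV @VV\Tor_i^{\rho_j}(R/\fm^{j-1},k)V \\
\Tor_i^{R/\fm^j}(R/\fm^j,k) @>>> \Tor_i^{R/\fm^j}(R/\fm^{j-1},k).
\end{CD}
\]
For $i\ge 1$ the lower-left term vanishes since $R/\fm^j$ is free over itself, so the image of $\eta_i^j$ lands in the kernel of the right vertical. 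Condition (2) makes this kernel trivial, so $\eta_i^j=0$ for all $i\ge 1$, and Remark \ref{zero} gives (1).

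For $(1)\Rightarrow(2)$ I would first apply Lemma \ref{G1} to $\varkappa=\rho_j$ with $a=j-1$: condition (i) there is exactly $\eta_i^j=0$ for $i>0$, and condition (ii) is vacuous since $\fm^{2(j-1)}\subseteq \fm^j$. Hence $\rho_j$ is Golod, and small by \ref{smallstuff}(1). Next, consider the long exact sequences on $\Tor(-,k)$ induced by
\[
0\to \fm^{j-1}/\fm^j\to R/\fm^j\to R/\fm^{j-1}\to 0
\]
both over $R$ and over $S=R/\fm^j$. Condition (1) makes the connecting map $\delta^R_i\colon\Tor_i^R(R/\fm^{j-1},k)\to\Tor_{i-1}^R(\fm^{j-1}/\fm^j,k)$ injective for $i\ge 1$, and $\delta^S_i$ is also injective because the $S$-LES collapses ($\Tor_i^S(S,k)=0$ for $i\ge 1$). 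Since $\fm^{j-1}/\fm^j$ is a $k$-vector space, the change-of-rings map $\Tor_{i-1}^R(\fm^{j-1}/\fm^j,k)\to\Tor_{i-1}^S(\fm^{j-1}/\fm^j,k)$ is a direct sum of copies of $\Tor_{i-1}^{\rho_j}(k,k)$, which is injective by smallness. A diagram chase in the naturality square of $\delta$ then forces $\Tor_i^{\rho_j}(R/\fm^{j-1},k)$ to be injective for $i\ge 1$, while $i=0$ is immediate. This proves (2).

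The equivalence $(2)\iff(3)$ is then \ref{eq} applied with $\varkappa=\rho_j$, which is Golod by the previous step. For the final claim, I would apply Lemma \ref{G1} to $\rho_l$ with the same $a=j-1$: the range $j\le l\le 2j-2$ gives $\fm^{2(j-1)}\subseteq \fm^l$, so (ii) is vacuous, while (i) asks the map $\Tor_i^R(R/\fm^l,k)\to\Tor_i^R(R/\fm^{j-1},k)$ to vanish for $i>0$, which it does because it factors through $\eta_i^j=0$. The main subtlety I anticipate is in $(1)\Rightarrow(2)$: confirming that the change-of-rings maps assemble into a morphism of long exact sequences so the diagram chase goes through — standard but worth pinning down carefully to get the directions right.
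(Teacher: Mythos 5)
Your proof is correct and takes essentially the same route as the paper's: the same commutative diagram built from the long exact sequences of $0\to\fm^{j-1}/\fm^j\to R/\fm^j\to R/\fm^{j-1}\to 0$ over $R$ and $R/\fm^j$, with the same use of smallness to get injectivity of the change-of-rings map on the semisimple module $\fm^{j-1}/\fm^j$, and the same application of Lemma~\ref{G1} with $a=j-1$ for the range $j\le l\le 2j-2$. The one presentational difference is that the paper deduces Golodness of $\rho_j$ from Levin's theorem \cite[3.15]{Levin}, whereas you obtain it directly as the $l=j$ case of the Lemma~\ref{G1} argument used for the final claim — a slight streamlining that keeps the proof self-contained; note also that the injectivity of $\delta^S_i$ you mention is not actually needed for the chase (injectivity of $\delta^R_i$ and of the right-hand vertical already suffice).
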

\begin{proof}
(1)$\Rightarrow$(2): 
 Let $i\ge 0$. Set $\ov{\fm}^{j-1}=\fm^{j-1}/\fm^j$.   Consider long exact sequences associated to the exact sequence $$0\to \ov{\fm}^{j-1}\to R/\fm^j\to R/\fm^{j-1}\to 0$$ and create the following commutative diagram with exact columns. 
\[
\xymatrixrowsep{1.3pc}
\xymatrixcolsep{2.6pc}
\xymatrix{
&\Tor_{i+1}^R(R/\fm^j,k)\ar@{->}[rr]\ar@{->}[d]_{\eta_{i+1}^j}&&\Tor_{i+1}^{R/\fm^j}(R/\fm^j,k)=0\ar@{->}[d]&\\
&\Tor_{i+1}^R(R/\fm^{j-1},k)\ar@{->}[rr]^{\Tor_{i+1}^{\rho_j}(R/\fm^{j-1},k)}\ar@{->}[d]^{\Delta_i}&&\Tor_{i+1}^{R/\fm^j}(R/\fm^{j-1},k)\ar@{->}[d]&\\
&\Tor_{i}^R(\ov{\fm}^{j-1},k)\ar@{->}[rr]^{\Tor_i^{\rho_j}(\ov{\fm}^{j-1},k)}&&\Tor_{i}^{R/\fm^j}(\ov{\fm}^{j-1},k)}
\]
By Remark \ref{zero}, the hypothesis that  $\Tor_i^R(\nu_j,k)=0$  implies that $\eta_{i+1}^j=0$, and thus the connecting homomorphism  $\Delta_i$ is injective. 

Levin's proof of  \cite[3.15]{Levin} shows $\Tor_i^R(\nu_j,k)=0$ for all $i$ implies $\rho_j$ is Golod. (This also follows from the last part of the proof.) In particular, the map $\rho_j$ is small by \ref{smallstuff}(1). Since $\ov{\fm}^{j-1}$ is a direct sum of copies of $k$, it follows that $\Tor_i^{\rho_j}(\ov{\fm}^{j-1},k)$ is injective. 

The bottom commutative square yields that $\Tor_{i+1}^{\rho_j}(R/\fm^{j-1},k)$ is injective. 

(2)$\Rightarrow$(1):  Assuming that $\Tor_{i+1}^{\rho_j}(R/\fm^{j-1},k)$ is injective, the top square in the commutative diagram above  gives that $\eta^j_{i+1}=0$, and thus $\Tor_i^R(\nu_j,k)=0$ by Remark \ref{zero}.

(1)$\Rightarrow$(3): As mentioned above, $\Tor_i^R(\nu_j,k)=0$ for all $i$ implies that $\rho_j$ is Golod. Since we already proved (1)$\Rightarrow$(2), we know that $\Tor_{i}^{\rho_j}(R/\fm^{j-1},k)$ is injective for all $i$. By \ref{eq}, $R/\fm^{j-1}$ is then inert by $\rho_j$.

(3)$\Rightarrow$(2): see \ref{eq}.

 Fix now $l$ such that $j\le l\le 2j-2$. We prove the last assertion of the proposition by applying \ref{G1}, with  $\varkappa=\rho_l$ and $a=j-1$. Set $\ov{R}=R/\fm^l$ and $\ov\fm =\fm /\fm^l$. Let
$$\ov{\rho}_{j-1}\col \ov{R}\to \ov{R}/\ov{\fm}^{j-1}$$
denote the canonical projection.  To satisfy the first hypothesis of \ref{G1}, we will show that the induced map
$$
\Tor_i^R(\ov{\rho}_{j-1},k): \Tor_i^R(\ov{R},k)\to \Tor_i^R(\ov{R}/\ov{\fm}^{j-1},k)
$$
 is zero for all $i>0$. Since $l\ge j$, we have $\ov{R}/\ov{\fm}^{j-1}=R/\fm^{j-1}$ and $\Tor_i^R(\ov{\rho}_{j-1},k)$ factors through 
$$\eta_i^j\col \Tor_{i}^R(R/\fm^j,k)\to\Tor_{i}^R(R/\fm^{j-1},k)\,.$$
Since $\Tor_i^R(\nu_j,k)=0$ for all $i\ge 0$ by assumption, we have that $\eta_i^j=0$ for all $i>0$  by Remark \ref{zero}. Hence $\Tor_i^R(\ov{\rho}_{j-1},k)=0$ for all $i>0$.

To satisfy the second hypothesis of \ref{G1}, we need to show that the induced map
$$
\Tor_i^R(\ov{\fm}^{2(j-1)},k)\to \Tor_i^R(\ov{\fm}^{j-1},k)
$$
induced by the inclusion $\ov{\fm}^{2(j-1)}\hookrightarrow\ov{\fm}^{j-1}$ is zero for all $i\ge 0$. In fact, this map is trivially zero since the inclusion $\fm^{2j-2}\subseteq\fm^l$ (given by the inequality $l\le 2j-2$)  implies $\ov{\fm}^{2j-2}=0$. Hence $\rho_l$ is Golod by \ref{G1}. 
\end{proof}

\begin{lemma}\label{CIinert}
Let $(R,\fm,k)$ be a local ring. If an integer $t$ satisfies $2\le t\le v(R)$, then $R/\fm^{t-1}$ is inert by  $\rho_t$. 
\end{lemma}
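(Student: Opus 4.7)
The plan is to apply Lescot's inertness criterion \ref{inert3} twice and combine the results via Remark \ref{2of3}.

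First, I would reduce to the complete case. Since $R\to\widehat R$ is faithfully flat and $\widehat R\otimes_R k=k$, the Tor modules of a finitely generated $R$-module with $k$ are unchanged upon completion; thus Poincar\'e series, and hence the condition of being inert, are preserved. So I may assume $R=Q/I$ for some regular local ring $(Q,\fn,k)$ with $I\subseteq\fn^t$, using $t\le v(R)$.

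Second, I would set up a commutative triangle suitable for Remark \ref{2of3}. Because $I\subseteq\fn^t$, one has $\fm^t=\fn^t/I$, hence a canonical identification $R/\fm^t\cong Q/\fn^t$, and the map $\rho_t$ fits into the diagram
\[
Q\xra{\varkappa_1} R\xra{\rho_t} R/\fm^t,
\]
whose composition $\varkappa_2:=\rho_t\circ\varkappa_1$ is the canonical surjection $Q\thra Q/\fn^t=R/\fm^t$.

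Third, I would apply \ref{inert3} to both $\varkappa_1$ (with ideal $I\subseteq\fn^t$) and $\varkappa_2$ (with ideal $\fn^t$), taking in each case the module $M=R/\fm^{t-1}$. In both applications the required hypothesis is that $M$ is annihilated by $\overline\fn^{t-1}$; since $\fn^{t-1}$ maps into $\fm^{t-1}$ under both $\varkappa_1$ and $\varkappa_2$, and $\fm^{t-1}$ kills $M$, this hypothesis holds. Therefore $M$ is inert by $\varkappa_1$ and also by $\varkappa_2$.

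Finally, applying Remark \ref{2of3} to the triangle above, inertness of $M$ by two of the three maps $\varkappa_1$, $\rho_t$, $\varkappa_2=\rho_t\circ\varkappa_1$ forces inertness by the third, namely $\rho_t$, which is the desired conclusion. The only points requiring any care are the preservation of Poincar\'e series under completion and the verification of the annihilator condition through the two different $Q$-module structures on $R/\fm^{t-1}$; both are routine, and I do not anticipate any genuine obstacle.
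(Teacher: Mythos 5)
Your proof is correct and follows essentially the same route as the paper: reduce to the complete case, apply Lescot's criterion \ref{inert3} to both $Q\to R$ (with ideal $I$) and $Q\to Q/\fn^t=R/\fm^t$ (with ideal $\fn^t$), and then invoke Remark \ref{2of3} on the triangle $Q\to R\to R/\fm^t$. The only point worth making explicit, which you gloss over slightly, is that in the second application the module is $(Q/\fn^t)/(\fn/\fn^t)^{t-1}=Q/\fn^{t-1}$, and the identification $Q/\fn^{t-1}=R/\fm^{t-1}$ uses $I\subseteq\fn^t$; but this is exactly the paper's observation as well.
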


\begin{proof} 
We may assume that $R$ is complete. Let $R=Q/I$ be a minimal Cohen presentation, with $(Q,\fn,k)$ a regular local ring. Since $t\le v(R)$, we have $I\subseteq \fn^t$.  We can make thus the identification $R/\fm^j=Q/\fn^j$ for all $j\le t$. Let $\varkappa: Q\to R$ and $\alpha_t: Q\to Q/\fn^t$ denote the canonical projections. Since $\alpha_t=\rho_t\circ \varkappa$, \ref{2of3} shows that it suffices to prove that $R/\fm^{t-1}$ is inert by $\alpha_t$. This can be seen by applying \ref{inert3} with $J=\fn^t$, $S=Q/\fn^t$, $\overline{\fn}=\fn/\fn^t$ and $M=S/\overline{\fn}^{t-1}=Q/\fn^{t-1}$. 
\end{proof}

\begin{theorem}\label{K2}
Let $(R,\fm,k)$ be a local ring and let $t$ be an integer satisfying $2\le t\le v(R)$. The following are equivalent:
\begin{enumerate}[\quad\rm(1)]
\item $\fm^t$ is a small submodule of $\fm^{t-1}$; 
\item $\rho_t$ is small;
\item $\rho_j$ is small for all $j\ge t$;
\item $\rho_t$ is Golod;
\item $\rho_j$ is Golod for all $j$ such that $t\le j\le 2t-2$;
\item $R$ is $t$-homogeneous and the algebra $\Ext_R^*(k,k)$ is generated by $\Ext^1_R(k,k)$ and $\Ext^2_R(k,k)$.
\end{enumerate}
\end{theorem}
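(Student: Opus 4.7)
The plan is to establish the cycle $(5)\Rightarrow(4)\Rightarrow(3)\Rightarrow(1)\Rightarrow(5)$, record $(1)\Leftrightarrow(2)$ using \ref{smallstuff}(3), and then handle $(1)\Leftrightarrow(6)$ separately via Remark \ref{independence}(2) together with the Ext algebra of the Golod ring $Q/\fn^{t}$. Throughout, I would work with a minimal Cohen presentation $\widehat R=Q/I$; the hypothesis $t\le v(R)$ yields $I\subseteq\fn^t$, so that $R/\fm^t\cong Q/\fn^t$ is a Golod ring (as recalled in \ref{goloddef}).

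The easy segments of the cycle are essentially citations: $(5)\Rightarrow(4)$ is the final assertion of Proposition \ref{nu=0}; $(4)\Rightarrow(3)$ is immediate since $t\in[t,2t-2]$ for $t\ge 2$; and $(3)\Rightarrow(1)$ is \ref{smallstuff}(1). The equivalence $(1)\Leftrightarrow(2)$ comes from \ref{smallstuff}(3) together with the trivial specialization $j=t$.

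The core of the argument is $(1)\Rightarrow(5)$. Since $R/\fm^t\cong Q/\fn^t$ is Golod and $\rho_t$ is small, \ref{smallstuff}(2) promotes $\rho_t$ to a Golod homomorphism. Lemma \ref{CIinert} tells us that $R/\fm^{t-1}$ is inert by $\rho_t$; combined with the Golod property just obtained, \ref{eq} then forces $\Tor_i^{\rho_t}(R/\fm^{t-1},k)$ to be injective for every $i$. Applying the implication $(2)\Rightarrow(1)$ of Proposition \ref{nu=0} with $j=t$ yields $\Tor_i^R(\nu_t,k)=0$ for all $i$, closing the cycle.

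Finally, for $(1)\Leftrightarrow(6)$, note that $\fm^t\subseteq\fm^2$ makes $\Ext^1_{\rho_t}(k,k)$ an isomorphism. For $(1)\Rightarrow(6)$, the smallness of $\rho_t$ in particular gives surjectivity of $\Ext^2_{\rho_t}(k,k)$, which by Remark \ref{independence}(2) is precisely $t$-homogeneity of $R$; invoking the classical fact that the Yoneda algebra of the Golod ring $Q/\fn^t$ is generated in degrees $1$ and $2$, the surjective algebra map $\Ext^*_{\rho_t}(k,k)$ transfers this generation property to $\Ext_R(k,k)$. For $(6)\Rightarrow(1)$, $t$-homogeneity supplies surjectivity of $\Ext^2_{\rho_t}(k,k)$ again via Remark \ref{independence}(2); since $\Ext_R(k,k)$ is then generated by elements lying in the image of the algebra homomorphism $\Ext^*_{\rho_t}(k,k)$, this homomorphism is surjective, so $\rho_t$ is small. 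I expect the only step that is not pure bookkeeping to be the assertion that $\Ext_{Q/\fn^t}(k,k)$ is generated in degrees $1$ and $2$: this is the lone external fact about Golod rings of the form $Q/\fn^t$ that the argument relies on, and it is the main obstacle to a fully self-contained proof.
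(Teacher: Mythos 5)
Your proposal is correct and follows essentially the same route as the paper: the same supporting results (Proposition \ref{nu=0}, Lemma \ref{CIinert}, the facts in \ref{smallstuff}, Remark \ref{thom}(2), and Levin's theorem on $\Ext_{Q/\fn^t}(k,k)$) are invoked at the same junctures. The only cosmetic difference is that you package $(1)\Rightarrow(3)\Rightarrow(5)$ as a single implication $(1)\Rightarrow(5)$ inside a cycle, whereas the paper records $(1)\Rightarrow(3)$ and $(3)\Rightarrow(5)$ separately.
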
  

\begin{proof}
The homological properties under consideration are invariant under completion.  We may assume thus $R$ is complete. Hence $R=Q/I$ with $(Q,\fn,k)$ a regular local ring and $I\subseteq \fn^t$, with $t\ge 2$.  In particular, we can make the identification $R/\fm^t=Q/\fn^t$. 

(2)$\Rightarrow$(3): This follows immediately from the definition of small homomorphim. 

(3)$\Rightarrow$(2): Clear. 

(3)$\Rightarrow$(4): Since $R/\fm^t=Q/\fn^t$ is Golod (see \ref{goloddef}), we can apply \ref{smallstuff}(2).

(4)$\Rightarrow$(2): See \ref{smallstuff}(1).

(4)$\Rightarrow$(1): By assumption $\rho_t$ is Golod. By Lemma \ref{CIinert}, $R/\fm^{t-1}$ is inert by $\rho_t$. Hence $\fm^t$ is a small submodule of $\fm^{t-1}$  by Proposition \ref{nu=0}.

(1)$\Rightarrow$(5): See Proposition \ref{nu=0}.

$(5)\Rightarrow$(4): Clear.  

$(2)\Rightarrow$(6): Assume $\rho_t$ is small, hence $\Ext_{\rho_t}(k,k)$ is a surjective homomorphism of graded algebras.  In \cite[5.9]{finiteext}, Levin shows $\Ext_{Q/\fn^t}(k,k)$ is generated by elements in degree $1$ and $2$. It follows that  $\Ext_R(k,k)$ is also generated in degrees $1$ and $2$. To see that $R$ is $t$-homogeneous, use  Remark \ref{thom}(2).

$(6)\Rightarrow$(2): Assume $R$ is $t$-homogeneous and the Yoneda algebra $\Ext_R(k,k)$ is generated by $\Ext^1_R(k,k)$ and $\Ext^2_R(k,k)$. To show that $\Ext_{\rho_t}(k,k)$ is surjective, it suffices to show that $\Ext_{\rho_t}^1(k,k)$ and $\Ext_{\rho_t}^2(k,k)$ are surjective. Since $t\ge 2$, we have that $\Ker(\rho_t)\subseteq \fm^2$, hence $\Ext_{\rho_t}^1(k,k)$ is an isomorphism, as discussed in \ref{smallstuff}. The fact that  $\Ext_{\rho_t}^2(k,k)$ is surjective is given by Remark \ref{thom}(2).
\end{proof}

We say that $R$ is a complete intersection  if the ideal $I$ in a minimal Cohen presentation $\widehat R=Q/I$ is generated by a regular sequence. For such rings, the structure of the algebra $\Ext_R(k,k)$ is known, see Sj\"odin \cite[\S 4]{Sjodin}. In particular, it is known that this algebra is generated in degrees $1$ and $2$. 

\begin{corollary}
If $R$ is a $t$-homogeneous complete intersection, then conditions (1)-(5) of the Theorem  hold. \qed
\end{corollary}

\begin{remark}
Connected $k$-algebras satisfying the condition that the Yoneda algebra is generated in degrees $1$ and $2$ are called $\mathcal K_2$ algebras by Cassidy and Shelton \cite{CassidyShelton}. Since Koszul algebras are characterized by the fact that their Yoneda algebras are generated in degree $1$, the notion of $\mathcal K_2$ algebra can be thought of as a generalization of  the notion of Koszul algebra.   
\end{remark}

\section{Compressed  Gorenstein local rings}

Compressed Gorenstein local rings  have been recently studied by Rossi and \c Sega \cite{RossiSega}; we recall below the definition given there. We consider this large class of rings as a case study for the homological properties of interest. 

\begin{bfchunk}{Compressed  Gorenstein local rings.}\label{compdef}
Let $(R, \fm, k)$ be a Gorenstein artinian local ring. The embedding dimension of $R$ is the integer $e=\rank_k(\fm/\fm^2)$, and  the socle degree of $R$ is the integer $s$ such that $\fm^s\ne 0= \fm^{s+1}$. Since $R$ is complete, a  minimal Cohen presentation of $R$ is  $R=Q/I$ with $(Q,\fn,k)$ a regular local ring and $I\subseteq \fn^2$. Set $$\varepsilon_i=\min \left\{ \binom{e-1+s-i}{e-1}, \binom{e-1+i}{e-1}\right\} \quad \text{for all $i$ with $0\le i\le s$.}$$ 

According to \cite[4.2]{RossiSega}, we have  
\begin{equation}
\label{compressed}
\lambda(R)\le \sum_{i=0}^e \varepsilon_i\,,
\end{equation}
where $\lambda(R)$ denotes the \textit{length} of $R$. We say that $R$ is a \textit{compressed} Gorenstein  local ring of socle degree $s$ and embedding dimension $e$ if $R$ has maximal length, that is, equality holds in \eqref{compressed}. 

If $R$ as above is compressed, we set 
\begin{equation}\label{t}
t=\left\lceil \frac{s+1}{2} \right\rceil\quad\text{and}\quad r=s+1-t\,,
\end{equation}
where $\lceil x \rceil$ denotes the smallest integer not less than a rational number $x$.
 
As discussed in \cite[4.2]{RossiSega}, we have $t=v(R)$. Note that if $s$ is even then $s=2t-2$ and $r=t-1$. If $s$ is odd then $s=2t-1$ and $r=t$.
\end{bfchunk}

\begin{remark}\label{degreet2}
It is shown in \cite[4.2(c)]{RossiSega} that if $R$ is a compressed Gorenstein local ring, then $\agr R$ is Gorenstein, and it is thus a compressed Gorenstein $k$-algebra. Note that compressed Gorenstein algebras can be regarded as being generic Gorenstein algebras, see the discussion in \cite[5.5]{RossiSega}. 

Let $R$ be a compressed Gorenstein local ring of socle degree $s$. When $s$ is even, the minimal free resolution of $\agr R$ over $\agr Q$ is described for example by Iarrobino in \cite[4.7]{Iarrobino}; in particular, it follows that $I^*$ is generated by homogeneous polynomials of degree $t$. According to Lemma \ref{degreet}, it follows that $R$ is $t$-homogeneous. 

When $s$ is odd, $I^*$ can be generated in degrees $t$ and $t+1$; see \cite[Proposition 3.2]{Boij}. It is conjectured in \cite[3.13]{Boij} that $I^*$ is generated  in degree $t$, and thus it is $t$-homogeneous, when $\agr R$ is generic in a stronger sense. 
\end{remark}

For the remainder of the section we use the assumptions and notation below. 

\begin{chunk}
\label{RS}
Let $(R,\fm,k)$ be a compressed Gorenstein local ring of embedding dimension $e$ and socle degree $s$, with $2\le s\ne 3$.  We consider a minimal Cohen presentation $R=Q/I$ with  $(Q,\fn,k)$ a  regular local ring and $I\subseteq \fn^2$. Since $t=v(R)$ we have $I\subseteq \fn^t$ and $I\not\subseteq \fn^{t+1}$.  Let $h\in I\cap \fn^t\setminus\fn^{t+1}$. Set $P=Q/(h)$ and $\fp=\fn/(h)$. Let $\varkappa:P\to R$ denote the canonical projection. The following properties shown in \cite{RossiSega} will be useful for our approach: 
\begin{enumerate}[\quad\rm(1)]
\item $\fm^{r+1}$ is a small submodule of $\fm^r$  (\cite[Theorem 3.3]{RossiSega}); 
\item $R/\fm^j$ is a Golod ring for $2\le j\le s$ (\cite[Proposition 6.3]{RossiSega}); 
\item $\varkappa: P\to R$ is a Golod homomorphism (\cite[Theorem 5.1]{RossiSega}). 
\item $\Po_k^R(z)\cdot d_R(z)=\Po_k^Q(z)$   (see \cite[Theorem 5.1]{RossiSega}), where
$$
d_R(z)=1-z(\Po_R^Q(z)-1)+z^{e+1}(1+z).
$$
Note that $d_R(z)$ is polynomial of degree $e+2$, since $\Po_R^Q(z)$ is a polynomial of degree $e$.
\end{enumerate} 
\end{chunk}

\begin{remark}
\label{eta}
Let $\eta\col Q\to P$ denote the canonical projection. If $M$ is an $R$-module with $\fm^{t-1}M=0$, then \ref{inert3} shows that $M$ is inert by $\varkappa\circ \eta$, since $I\subseteq \fn^t$.  It follows that $M$ is also inert by $\varkappa$, by \ref{2of3}. 

Note that the condition $\fm^{t-1}M=0$ is satisfied for $M=R/\fm^j$ with $j\le t-1$ and also for $M=\fm^j$ with $j\ge r+1$ (since $t-1+r+1=s+1$), and thus  $M$ is inert by $\varkappa$ and by $\varkappa\circ \eta$, by the above. The case $M=\fm^r$ is treated below. 
\end{remark}

\begin{lemma}
\label{kappa}
The $R$-module $\fm^r$ is inert by $\varkappa$. 
\end{lemma}

\begin{proof}
Let $i\ge 0$. Consider the commutative diagram: 
\[
\xymatrixrowsep{1.5pc}
\xymatrixcolsep{1.9pc}
\xymatrix{
\Tor_{i}^P(\fm^{r+1},k)\ar@{->}[rr]^{\Tor_i^P(\nu_{r+1},k)}\ar@{->}[d]&&\Tor_i^P(\fm^r,k)\ar@{->}[d]^{\beta}\ar@{->}[r]^{\alpha\quad}&\Tor_i^P(\fm^r/\fm^{r+1},k)\ar@{->}[d]^{\gamma}\\
\Tor_{i}^R(\fm^{r+1},k)\ar@{->}[rr]&&\Tor_i^R(\fm^r,k)\ar@{->}[r]&\Tor_i^R(\fm^r/\fm^{r+1},k)
}
\]
where $\beta=\Tor_i^{\varkappa}(\fm^r,k)$ and $\gamma=\Tor_i^{\varkappa}(\fm^r/\fm^{r+1},k)$. Since $\Tor_i^P(\nu_{r+1},k)=0$ by \ref{RS}(1), $\alpha$ is injective. Since $\varkappa$ is Golod, it is in particular small by \ref{smallstuff}(1), and it follows that $\gamma$ is injective, since $\fm^r/\fm^{r+1}$ is a direct sum of copies of $k$. The commutative square on the right shows that $\beta$ is injective as well, hence $\fm^r$ is inert by $\varkappa$ by \ref{eq}. 
\end{proof}

We  now prove Theorem 2 in the introduction. We restate it below, with some more detail in part (1).  

\begin{theorem}\label{main}
Let $2\le s\ne 3$ and let  $R$ be a compressed Gorenstein local ring of socle degree $s$.  Let $n\ge 1$. The following hold: 
\begin{enumerate}[\quad\rm(1)]
\item  $\fm^n$ is a small submodule of $\fm^{n-1}$  if and only if $n>s$ or $n=r+1$. 
Furthermore,  if $n\ne r+1$ and $n\le s$, then $\Tor_i^R(\nu_n,k)\ne 0$ for infinitely many values of $i$. 
\item $\rho_n\col R\to R/\fm^n$ is Golod if and only if $n\ge r+1$.
\end{enumerate}
\end{theorem}

\begin{corollary}
\label{cor}
With $R$ as in the theorem, the following hold: 
\begin{enumerate}[\quad\rm(1)]
\item If $s$ is even, then $\Ext_R(k,k)$ is generated by $\Ext^1_R(k,k)$ and $\Ext^2_R(k,k)$.
\item  If $s$ is odd  and $R$ is $t$-homogeneous, then $\Ext_R(k,k)$ is not generated by $\Ext^1_R(k,k)$ and $\Ext^2_R(k,k)$. 
\end{enumerate}
\end{corollary}
\begin{proof}
If $s$ is even, then $r=t-1$ and Theorem \ref{main}(1) gives that $\fm^t$ is a small submodule of $\fm^{t-1}$. 
If $s$ is odd, then $r=t$, and Theorem \ref{main}(1) gives that $\Tor_i^R(\nu_t,k)\ne 0$ for infinitely many values of $i$, hence $\fm^t$ is not a small submodule of $\fm^{t-1}$. 
Both conclusions follow then from Theorem \ref{K2}. 
\end{proof}

\begin{proof}[Proof of Theorem {\rm\ref{main}}]
Assuming that (1) is proved, we prove (2) as follows. 

Since $\fm^{r+1}$ is a small submodule of $\fm^r$, we know that that $\rho_{r+1}$ is a Golod homomorphism, and furthermore a small homomorphism (see Section 2).  Let $n\ge r+1$. The homomorphism $\rho_n$ factors thorugh $\rho_{r+1}$, and it is thus small as well. Since $R/\fm^j$ is Golod by \ref{RS}(2), it follows by (2) in \ref{smallstuff} that $\rho_j$ is Golod. If $n\le r$, then we also have $n\le t$, since $r=t$ or $r=t-1$. 
In view of Theorem \ref{K2}, the fact that $\Tor_*^R(\nu_n,k)\ne 0$ in this case  implies that $\rho_n$ is not Golod. 

We now prove (1).  Let $j\ge 0$. We use the notation introduced in \ref{A=0}, noting that $\nu_{\fm^j}=\nu_{j+1}$. We have 
$$
\Tor_*^R(\nu_{j+1},k)=0 \iff T^R_{\fm^j}(z)=0 
$$
and $\Tor_i^R(\nu_{j+1},k)\ne 0$ for infinitely many $i$ if and only if $T^R_{\fm^j}(z)\notin \mathbb Z[z]$.

The conclusion will be established through a concrete computation of  $T^R_{\fm^{j}}(z)$. 

Using \eqref{poincare}  we have: 
\begin{equation}
\label{S}
\Po_{\fm^{j}}^S(z)-a_j\Po_k^S(z)+z\Po_{\fm^{j+1}}^S(z)=(1+z)T^S_{\fm^{j}}(z)\\
\end{equation}
where $a_j=\rank_k(\fm^{j}/\fm^{j+1})$ and $S=R$ or $S=P$ or $S=Q$. There are four distinct cases to be considered: 

\begin{Case1}
Assume $j=r$. Recall that $\fm^{r+1}$ and $\fm^{r}$ and $k$ are all inert by $\varkappa$, by \ref{kappa} and \ref{eta}. Using the definition of inertness for each of these modules, an application of the formula \eqref{S} for $j=r$, with $S=R$ and then with  $S=P$, gives: 
$$
T^R_{\fm^r}(z)=T^P_{\fm^r}(z)\cdot \frac{\Po^R_k(z)}{\Po^P_k(z)}\,.
$$
Since we know that $\Tor_*^P(\nu_{r+1},k)=0$, see \ref{RS}(1),  we have that $T^P_{\fm^r}(z)=0$, hence $T^R_{\fm^r}(z)=0$ and thus $\Tor_*^R(\nu_{r+1},k)=0$. 
\end{Case1}

\begin{Case2}
Assume $r+1\le j<s$.  We know that $\fm^{j+1}$, $\fm^{j}$ and $k$ are all inert by $\varkappa\circ\eta\col Q\to R$ by \ref{eta}. Proceeding as above, we obtain: 
$$
T^R_{\fm^j}(z)=T^Q_{\fm^j}(z)\cdot \frac{\Po^R_k(z)}{\Po^Q_k(z)}=\frac{T^Q_{\fm^j}(z)}{d_R(z)}
$$
where the second equality is obtained using \ref{RS}(4).

In \cite[Lemma 4.4]{RossiSega} it is proved that the map $\Tor_i^Q(\nu_{r+1},k)$ is zero for all $i\ne e$ and is bijective for $i=e$. The argument given in the proof there, with a minor adjustment, shows that the following more general statement holds: For any $j$ with  $r\le j\le s$, the map $\Tor_i^Q(\nu_{j+1},k)$ is zero for all $i\ne e$ and is bijective for $i=e$. 

Note that $\Tor_e^Q(\fm^j,k)\cong \Soc(\fm^j)$, the socle of $\fm^j$. Since $R$ is Gorenstein, $\rank_k \Soc(\fm^j)=\rank_k\Soc(R)=1$. It follows that 
$$T^Q_{\fm^j}(z)=z^e$$
and thus $T^R_{\fm^j}(z)$ is a quotient of a polynomial of degree $e$ by a polynomial of degree $e+2$. We conclude that  $T^R_{\fm^j}(z)$ is not a polynomial and thus $\Tor_i^R(\nu_{j+1},k)\ne 0$ for infinitely many $i$. (On the other hand, note that $T^R_{\fm^j}(z)$ is a multiple of $z^e$ in $\mathbb Z[[z]]$, and this implies that  $\Tor_i^R(\nu_{j+1},k)= 0$ for all $i<e$.)
\end{Case2}

\begin{Case3}
Assume $j=t-1$ and $j<r$. Since $r=t-1$ when $s$ is even, this case can happen only when $s$ is odd. In this case, one has $r=t$, hence $j=r-1$ as well. 
In particular, $j+1=r$ and we use the already established fact that $T^R_{\fm^r}(z)=0$ in the second line below, in order to replace $\Po_{\fm^{j+1}}^R(z)$. 
\begin{align*}
z(1+z)T^R_{\fm^{j}}(z)&=z\Po_{\fm^{j}}^R(z)-a_{j}z\Po_k^R(z)+z^2\Po_{\fm^{j+1}}^R(z)\\
&=(\Po^R_{R/\fm^{j}}(z)-1)-a_{j}z\Po_k^R(z)+z^2\big(a_{j+1}\Po^R_k(z)-z\Po^R_{\fm^{j+2}}(z)\big)\\
&=\left(\Po^Q_{R/\fm^{j}}(z)-(a_{j}z-a_{j+1}z^2)\Po_k^Q(z)-z^3\Po^Q_{\fm^{j+2}}(z)\right)\cdot  \frac{\Po^R_k(z)}{\Po^Q_k(z)} -1
\end{align*}
For the last equality, we have used the definition of inertness and the fact that the $R$-modules $R/\fm^j$, $\fm^{j+2}$ and $k$ are all inert by $\varkappa\circ\eta$; this can be seen using again \ref{eta}, since $j=t-1$ and $j+2=r+1$. 
Using  \ref{RS}(4), we have
$$
T^R_{\fm^{j}}(z)=\frac{\Po^Q_{R/\fm^{j}}(z)-(a_{j}z-a_{j+1}z^2)\Po_k^Q(z)-z^3\Po^Q_{\fm^{j+2}}(z)-d_R(z)}{z(z+1)d_R(z)}
$$
Since $d_R(z)$ has degree $e+2$ and $\Po^Q_{\fm^{j+2}}(z)$ is a polynomial of degree $e$ (note that $\fm^{j+2}=\fm^{t+1}\ne 0$), the outcome of this computation is that $T^R_{\fm^{j}}(z)$ is a quotient of a polynomial of degree $e+3$ by a polynomial of degree $e+4$. Again, it is clear that $T^R_{\fm^{j}}(z)$ cannot be a polynomial. 
\end{Case3}

\begin{Case4}
Assume $j\le t-2$.
We have: 
\begin{align*}
z(1+z)T^R_{\fm^j}(z)&=z\Po_{\fm^{j}}^R(z)-a_jz\Po_k^R(z)+z^2\Po_{\fm^{j+1}}^R(z)\\
&=(\Po^R_{R/\fm^j}(z)-1)-a_jz\Po_k^R(z)+z\big(\Po^R_{R/\fm^{j+1}}(z)-1\big)\\
&=\left(\Po^Q_{R/\fm^j}(z)-a_jz\Po_k^Q(z)+z\Po^Q_{R/\fm^{j+1}}(z)\right)\cdot \frac{\Po^R_k(z)}{\Po^Q_k(z)}-1-z
\end{align*}
where the third equality is due to the fact that $R/\fm^j$, $R/\fm^{j+1}$ and $k$ are all inert by $\varkappa\circ\eta$, in view of  \ref{eta}. 
Using   \ref{RS}(4) as above, one sees that $T^R_{\fm^j}(z)$ can be written as a qotient of a polynomial of degree $e+3$ by a polynomial of degree $e+4$, and thus it is not a polynomial. 
\end{Case4}
Since $\Tor_*^R(\nu_j,k)=0$ is clearly zero when $j>s$, we exhausted all cases for $j$. 
\end{proof}

\section{Acknowledgement}
We would like to thank Frank Moore for a useful discussion about $\mathcal K_2$ algebras, and Luchezar Avramov and the referee for suggestions regarding the exposition.

\end{document}